\documentclass{amsart}

\usepackage{amssymb}
\usepackage{amscd}
\usepackage{amsfonts}
\usepackage{version}

\numberwithin{equation}{section}
\theoremstyle{plain}
\newtheorem{theorem}[subsection]{Theorem}

\newtheorem{lemma}[subsection]{Lemma}

\newtheorem{definition}[subsection]{Definition}

\renewcommand{\leq}{\leqslant}
\renewcommand{\geq}{\geqslant}
\newsavebox{\proofbox}
\savebox{\proofbox}{\begin{picture}(7,7)  \put(0,0){\framebox(7,7){}}\end{picture}}

\newcommand\Z{\mathbb{Z}}

\newcommand\C{\mathbb{C}}

\newcommand\SL{\operatorname{SL}}

\newcommand\im{\operatorname{im}}

\newcommand\GL{\operatorname{GL}}

\newcommand\tr{\operatorname{tr}}
\newcommand\rss{\operatorname{rss}}

\newcommand\F{\mathbb{F}}

\newcommand\id{\operatorname{id}}
\def\proof{\noindent\textit{Proof. }}

\def\endproof{\hfill{\usebox{\proofbox}}\vspace{11pt}}

\begin{document}

\title{Linear Approximate Groups}
\author{Emmanuel Breuillard}
\address{Laboratoire de Math\'ematiques\\
B\^atiment 425, Universit\'e Paris Sud 11\\
91405 Orsay\\
FRANCE}
\email{emmanuel.breuillard@math.u-psud.fr}

\author{Ben Green}
\address{Centre for Mathematical Sciences\\
Wilberforce Road\\
Cambridge CB3 0WA\\
England }
\email{b.j.green@dpmms.cam.ac.uk}

\author{Terence Tao}
\address{Department of Mathematics, UCLA\\
405 Hilgard Ave\\
Los Angeles CA 90095\\
USA}
\email{tao@math.ucla.edu}

\subjclass{20G40, 20N99 }

\begin{abstract} This is an informal announcement of results to be described and proved in detail in \cite{bgst-long}. We give various results on the structure of approximate subgroups in linear groups such as $\SL_n(k)$. For example, generalising a result of Helfgott (who handled the cases $n = 2$ and $3$), we show that any approximate subgroup of $\SL_n(\F_q)$ which generates the group must be either very small or else nearly all of $\SL_n(\F_q)$. The argument is valid for all Chevalley groups $G(\F_q)$. Extending work of Bourgain-Gamburd we also give some applications to expanders. \end{abstract}

\maketitle
\tableofcontents

\section{Introduction}

In this note we announce some new results on approximate subgroups in linear groups.
We begin by recalling the notion of an approximate group, first introduced (in the non-abelian setting) in \cite{tao-noncommutative}. See \cite{green-survey} for a more extensive motivating discussion.

\begin{definition}[Approximate groups] \label{def1.1} Let $K \geq 1$. A nonempty finite set $A$ in some ambient group $G$ is
called a \emph{$K$-approximate group} if
\begin{enumerate}
\item It is symmetric, i.e. if $a \in A$ then $a^{-1} \in A$, and the identity lies in $A$;
\item There is a symmetric subset $X \subseteq G$ with $|X| \leq K$ such that $A \cdot A \subseteq X \cdot A$, where $A \cdot A = \{a_1 a_2 : a_1, a_2 \in A\}$ is the product set of $A$ with itself.
\end{enumerate}
\end{definition}

Note in particular that a $1$-approximate group in $G$ is the same thing as a finite subgroup of $G$. For the rest of the paper we will assume that $K \geq 2$, and in this regime there are $K$-approximate groups which are not close to genuine groups; the simplest example is that of a geometric progression $\{g^n : |n| \leq N\}$, and there also exist higher-dimensional and nilpotent generalisations of this.  Again, \cite{green-survey} may be consulted for further discussion.

Many papers have been written in which the aim is to classify a certain class of approximate groups. For example, the Fre\u{\i}man-Ruzsa theorem \cite{ruz-freiman} provides a description of approximate subgroups of the integers $\Z$. ``Classification'' in this context must be interpreted quite loosely. The following notion of \emph{control}, first introduced in \cite{tao-solvable}, has proved very useful in this context.

\begin{definition}[Control] \label{def1.2}Suppose that $A$ and $B$ are two sets in some ambient group,
and that $K \geq 1$ is a parameter. We say that $A$ is $K$-controlled by $B$, or that $B$ $K$-
controls $A$, if $|B| \leq K|A|$ and there is some set $X$ in the ambient group with $|X | \leq K$
and such that $A \subseteq (X \cdot B) \cap (B \cdot X)$.
\end{definition}

Given this definition, one may describe the classification problem for approximate groups as follows: given some class $\mathcal{C}$ of approximate groups, find some smaller, more highly-structured, class of approximate groups $\mathcal{C}'$ such that every object in $\mathcal{C}$ is efficiently controlled by an object in $\mathcal{C}'$.\vspace{11pt}

\noindent\textsc{notation.} The letter $C$ always denotes an absolute constant, but different instances of the notation may refer to different constants. If $C$ depends on some other parameter (for example, if we are working in $\SL_n$, $C$ might need to depend on $n$) then we will indicate this dependence with subscripts. If $A$ is a finite set then $|A|$ denotes the cardinality of $A$. The letter $K$ is reserved for the positive real parameter appearing in the definition of approximate group. For non-negative quantities $X, Y$, we use $X \lesssim Y$ or $Y \gtrsim X$ to denote the estimate $X \leq K^C Y$, and $X \sim Y$ to denote the estimates $X \lesssim Y \lesssim X$. The symbol $p$ always denotes a prime number, and $\F_p$ denotes the field of order $p$. Finally, we use $A^k := \{ a_1 \ldots a_k: a_1,\ldots,a_k \in A\}$ to denote the $k$-fold product set of a collection $A$ of group elements, noting that if $A$ is a $K$-approximate group then $|A^{k}| \leq K^{k-1}|A|$ for all positive integers $k$.\vspace{11pt}

\noindent\textsc{Acknowledgments.} EB is supported by the ERC starting grant 208091-GADA.
BG was, while this work was being carried out, a fellow at the Radcliffe Institute at Harvard. He is very happy to thank the Institute for proving excellent working conditions.
TT is supported by a grant from the MacArthur Foundation, by NSF grant DMS-0649473, and by the NSF Waterman award.

The authors are particularly indebted to Tom Sanders for many useful discussions, and to Harald Helfgott for many useful discussions on product expansion estimates, and their relationship with the sum-product phenomenon. Prior work of Helfgott, especially the paper \cite{helfgott-sl3}, has been a key source of inspiration for us. E. Breuillard is particularly indebted to Harald Helfgott for extensive explanations of his prior work during numerous visits to Paris in 2008 and 2009. We also thank Nick Gill for useful conversations and Olivier Guichard for pointing out some inaccuracies in an earlier version of this paper. Finally we acknowledge the intellectual debt we owe to the model-theoretic work of Hrushovski \cite{hrush}, without which we would not have started this project. \vspace{5pt}
\vspace{11pt}

\noindent\textsc{Notes added in proof.}  Prior to this announcement, Nick Gill and Harald Helfgott had announced in conference talks some special cases of our main theorem for $\SL_n(\Z/p\Z)$ and sets $A$ of ``small size'' with respect to $p$. Their results have since been released in \cite{gill}.

Also, simultaneously with the release of this announcement, Pyber and Szabo \cite{pyber} have independently announced a set of results which have significant overlap with those presented here, in particular establishing an alternate proof of Theorem \ref{mainthm1}.  There are some similarities in common in the argument (in particular, in the reliance on Lemma \ref{crucial-lemma-sln}) but the arguments and results are slightly different in other respects.

\section{Statement of results}

In a celebrated paper \cite{helfgott-sl2}, H.~Helfgott provided a satisfactory solution to the classification problem for the group $\SL_2(\F_p)$. His methods adapt easily to (and in fact are rather easier in) $\SL_2(\C)$, which was studied earlier by Chang \cite{chang}.  His arguments give the following result.

\begin{theorem}[Helfgott]\label{helfgott-strong}  Suppose that $A \subseteq \SL_2(k)$ is a $K$-approximate group.
\begin{enumerate}
\item If $k = \C$  then $A$ is $K^C$-controlled by $B$, an \emph{abelian} $K^C$-approximate subgroup of $\SL_2(k)$;
\item If $k = \F_p$ then $A$ is $K^C$-controlled either by a \emph{solvable} $K^C$-approximate subgroup of $\SL_2(k)$ or by $\SL_2(k)$ itself.
\end{enumerate}
\end{theorem}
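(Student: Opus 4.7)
The plan is to follow Helfgott's original strategy, recast in the approximate-group language via the non-commutative Plünnecke--Ruzsa and Tao covering calculus. My starting point would be a trichotomy based on the trace map. For any $g \in A$ with $\tr(g) \neq \pm 2$ the centralizer $T_g := C_G(g)$ is a maximal torus, hence abelian. In the first sub-case, some such $g$ satisfies $|A \cap T_g| \gtrsim |A|$; a Ruzsa covering argument then immediately gives that $A$ is $K^C$-controlled by the abelian set $(A \cap T_g)^2$, yielding the abelian conclusion. In the second sub-case $A$ is concentrated on elements with $\tr = \pm 2$, so every non-central element is $\pm 1$ times a unipotent lying in a unique Borel; either all such elements share a common Borel (giving solvable control in the $\F_p$ case, and abelian control in the $\C$ case after noting that commuting unipotents lie in a common one-parameter subgroup), or two of them lie in different Borels and together generate a Zariski-dense subgroup, which I would treat together with the main case below.

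In the remaining main case, $A$ contains a regular semisimple $g$, and for a positive proportion of $a \in A$ one has $aga^{-1} \notin T_g$, so $T_g$ and $aT_ga^{-1}$ are two distinct maximal tori. Diagonalising $g$ and $aga^{-1}$ simultaneously over $\bar k$ in appropriate bases, the matrix coefficients of short words $g^m (aga^{-1})^n$ become Laurent polynomials in the eigenvalue $\lambda$ of $g$ (and of $aga^{-1}$). Since $|A^{k}| \leq K^{k-1}|A|$, the same bound holds for the trace set $\tr(A^k)$, and by expanding $\tr(g^m (aga^{-1})^n)$ I would extract a subset $S \subseteq k$ for which both $|S+S|$ and $|S \cdot S|$ are $\leq K^{O(1)}|S|$. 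The sum--product theorem (Bourgain--Katz--Tao over $\F_p$, the classical version over $\C$) then forces either $|S| \leq K^C$, which pushes us back into the abelian sub-case since $g$ is then forced to lie in a bounded union of conjugates of a single torus, or $|S| \gtrsim p^{1-\delta}$ in the $\F_p$ setting, meaning $|A|$ is already a positive power of $p$ comparable to $|\SL_2(\F_p)|$.

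In the latter scenario, a short bootstrapping step shows that $A$ $K^C$-controls all of $\SL_2(\F_p)$: because $\SL_2(\F_p)$ is quasi-random in the sense of Gowers and has no proper subgroups of index $\ll p$, any subset of density $\gtrsim 1$ satisfies $A \cdot A \cdot A = \SL_2(\F_p)$, which gives the required control.

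The hard part will be the scalar-set extraction in the second paragraph: converting the purely multiplicative, non-commutative hypothesis $|A \cdot A| \leq K|A|$ into a genuine sum--product hypothesis on a subset of $k$, while keeping all losses polynomial in $K$. One must also ensure that the pair $(g, aga^{-1})$ one picks is sufficiently generic (an escape-from-subvarieties argument), so that distinct words give distinct traces with high frequency. The remaining ingredients---the Plünnecke--Ruzsa/Tao calculus, the sum--product theorem, and the basic subgroup geometry of $\SL_2$---are essentially off-the-shelf and should assemble into the stated dichotomy.
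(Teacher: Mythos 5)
This is a statement the paper cites (as Helfgott's) without reproving; the paper's own new $\SL_2$ argument in \S\ref{sl2-sec} proves the weaker Theorem \ref{helfgott-weak}, and it emphatically does \emph{not} use sum--product---instead it runs the ``involved tori'' conjugation-invariance argument (Lemma \ref{crucial-lemma}) followed by an orbit--stabiliser count, and even remarks that sum--product is recoverable as a \emph{corollary}. Your proposal, centred on extracting a scalar set $S$ with small $|S+S|$ and $|S\cdot S|$ from traces of words and then invoking Bourgain--Katz--Tao, is a reconstruction of Helfgott's original route rather than the paper's route. That is a legitimate alternative strategy, but as written it has a real gap.

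The problem is in your last step. Over $\F_p$ the sum--product dichotomy gives, in the non-degenerate case, only $|S|\gtrsim p^{1-\delta}$. Since $S$ is a set of traces it lives in $\F_p$, so $|S|\leq p$, and all you can deduce about $A$ is $|A|\geq |A^k|/K^{k-1}\gtrsim |S| \gtrsim p^{1-\delta}$. But $|\SL_2(\F_p)|\asymp p^3$, so this is nowhere near ``density $\gtrsim 1$.'' Gowers' quasirandomness argument forces $AAA=G$ only once $|A|> |G|/d(G)^{1/3}\asymp p^{8/3}$, and $p^{1-\delta}$ does not clear that bar. What Helfgott actually extracts from sum--product is a \emph{growth} estimate of the form $|A^3|\geq |A|^{1+\eps}$ unless $A$ is trapped in a proper subgroup or already has size $\geq |G|^{1-\delta'}$; for a $K$-approximate group this collides with $|A^3|\leq K^2|A|$ to force $|A|\leq K^{2/\eps}$ or $|A|\geq |G|^{1-\delta'}$, and only then (with $\delta'$ small) does the quasirandomness bootstrap close the argument. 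You need to make this growth/iteration step explicit, with polynomial-in-$K$ losses, and choose the thresholds so that the two branches actually meet; as stated your argument proves only that $|A|\gtrsim p^{1-\delta}$, which is compatible with $A$ being far from controlling $\SL_2(\F_p)$.
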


Helfgott's theorem has found many applications, for example to proving that certain Cayley graphs are expanders \cite{bourgain-gamburd} and to certain nonlinear sieving problems \cite{bgs}. For these applications only the following somewhat weaker statement is necessary.

\begin{theorem}[Helfgott]\label{helfgott-weak}
Suppose that $A \subseteq \SL_2(\F_p)$ is a $K$-approximate group that generates $\SL_2(\F_p)$. Then $A$ is $K^C$-controlled by either $\{id\}$ or by $\SL_2(\F_p)$ itself.
\end{theorem}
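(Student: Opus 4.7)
The plan is to deduce Theorem~\ref{helfgott-weak} from Theorem~\ref{helfgott-strong}. Applying part~(ii) of Theorem~\ref{helfgott-strong} to our $K$-approximate group $A$, we land in one of two cases: either $A$ is $K^C$-controlled by $\SL_2(\F_p)$ itself (which is already one of the allowed conclusions of Theorem~\ref{helfgott-weak}), or $A$ is $K^C$-controlled by some solvable $K^C$-approximate subgroup $B$ of $\SL_2(\F_p)$. The work is all in the second alternative: we must show that the hypothesis $\langle A\rangle = \SL_2(\F_p)$ forces $|A| \leq K^{C'}$, so that $A$ is $K^{C'}$-controlled by $\{\id\}$.

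In the second case, write $A \subseteq X \cdot B$ with $|X| \leq K^C$ and let $S := \langle B\rangle$, a solvable subgroup of $\SL_2(\F_p)$. For $p \geq 5$ the group $\SL_2(\F_p)$ is not solvable, so $S$ is a \emph{proper} subgroup, and from $A \subseteq X \cdot S$ we see that $A$ meets at most $K^C$ left cosets of $S$. Iterating yields $A^n \subseteq X^n \cdot S$, meeting at most $K^{Cn}$ left cosets. Taking the least $n$ for which $A^n = \SL_2(\F_p)$ (which exists by generation and finiteness) gives the two constraints $[\SL_2(\F_p) : S] \leq K^{Cn}$ and $|\SL_2(\F_p)| \leq K^{n-1}|A|$, from which one aims to extract a polynomial-in-$K$ bound on $|A|$.

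The main obstacle is that these two inequalities, even combined with the fact that $|S| \leq p(p-1)$ for $S$ contained in a Borel subgroup of $\SL_2(\F_p)$, do not directly yield an absolute bound on $|A|$; they only give a bound involving $\log p$. The cleanest route around this appears to be to exploit the Frei\u{\i}man-Ruzsa-type description of approximate subgroups of the Borel $U \rtimes T$ (with both $U \cong \F_p^+$ and $T \cong \F_p^\times$ abelian), so that $B$ is essentially a product of arithmetic progressions. One then argues that for such a structured $B$, the combined set $X \cdot B$ cannot generate the non-solvable group $\SL_2(\F_p)$ unless the relevant progressions are short, which forces $|B| \leq K^{C'}$ and hence $|A| \leq K^{C''}$. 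This last ``escape from the solvable locus'' step is the crux of the proof.
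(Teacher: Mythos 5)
The paper does not actually prove Theorem~\ref{helfgott-weak} by deducing it from Theorem~\ref{helfgott-strong}: both are stated as quotations from Helfgott, and the paper's own (new) proof appears in \S\ref{sl2-sec} as the $n=2$ case of Theorem~\ref{mainthm1}. That argument is completely different from yours: it uses Helfgott's torus estimate (Lemma~\ref{torus-lem}), the ``conjugation invariance of involved tori'' Lemma~\ref{crucial-lemma}, and then an orbit--stabiliser count on the set of involved tori to show that $|A| \gtrsim |G|$ whenever $|A| > CK^C$. So your attempt to deduce the weak form from the strong form is a genuinely different route, and it is worth flagging what would be needed to make it work.

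The problem is that the route has a real gap, which you yourself identify. The coset-counting argument (step with $A^n \subseteq X^n \cdot S$) gives only $[\SL_2(\F_p):S] \leq K^{Cn}$ and $|G| \leq K^{n-1}|A|$ for the minimal $n$ with $A^n = G$; as you note, since $|S|$ can be as large as $\sim p^2$, the first inequality only forces $n \gtrsim \log p / \log K$, and then the second gives nothing better than $|A| \gtrsim p^2$, which is still compatible with $|B|$ being a large subset of a Borel. Your proposed fix --- invoke Fre\u{\i}man--Ruzsa structure in the Borel and argue that $X \cdot B$ ``cannot generate the non-solvable group unless the progressions are short'' --- is precisely the nontrivial content that needs proof, and it is not a soft consequence of the structure of $B$. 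The natural way to make it precise is to observe that for $x \notin P := \langle B \rangle$, the conjugate $x B x^{-1}$ lies in a different Borel and hence meets $B$ only inside a torus, and then to derive from this a contradiction with the approximate-group hypothesis on $A$; but that is exactly a ``growth upon leaving a proper subgroup'' phenomenon of the same depth as the theorem you are trying to prove. As written, the proposal replaces the key quantitative step by an assertion, so it does not constitute a proof. If your goal was to give a self-contained argument, you should instead follow the route of \S\ref{sl2-sec}: establish conjugation invariance of involved tori and then run the orbit--stabiliser argument, which closes in a few lines once Lemma~\ref{torus-lem} is in place.
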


Very recently, this result has been extended to fields $\F_q$ of prime power order $q=p^j$ by Dinai \cite{dinai}.  An extension to $\SL_n(\F_p)$ in the case of small $A$ (specifically, $|A| \leq p^{n+1-\delta}$ for some $\delta > 0$) was also recently announced by Gill and Helfgott \cite{gill}.

The proof that such a statement suffices for the sieving work of Bourgain, Gamburd and Sarnak \cite{bgs} is contained in a very recent preprint of Varj\'u \cite{varju}; the original argument of \cite{bgs} required a careful analysis of the \emph{proof} of Helfgott's result.

Our first main result generalises Theorem \ref{helfgott-weak} as follows.

\begin{theorem}\label{mainthm1}
Let $k$ be a finite field. Suppose that $A \subseteq \SL_n(k)$ is a $K$-approximate group that generates $\SL_n(k)$. Then $A$ is $K^{C_n}$-controlled by either $\{\id\}$ or by $\SL_n(k)$ itself.
\end{theorem}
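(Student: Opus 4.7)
The plan is to reduce the statement to a \emph{product theorem}: if $A$ is a $K$-approximate group generating $\SL_n(k)$, then either $|A| \leq K^{C_n}$ (so $A$ is trivially $K^{C_n}$-controlled by $\{\id\}$) or $|A^3| \geq K|A|$. Given such a dichotomy, iterating at most $O_n(\log K)$ times forces $|A^m| \gtrsim |\SL_n(k)|$ for some $m = O_n(1)$, after which a Ruzsa covering argument yields that $A$ is $K^{C_n}$-controlled by $\SL_n(k)$ itself. Establishing the dichotomy is the entire substance of the theorem.

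The core input is a Larsen--Pink-style estimate for approximate groups (the ``crucial lemma'' referenced in the introduction as Lemma \ref{crucial-lemma-sln}), which should say that for any proper algebraic subgroup $H \subsetneq \SL_n$ of bounded complexity,
\[
|A^2 \cap H(k)| \leq K^{C_n} |A|^{\dim H / \dim \SL_n}.
\]
This is an algebro-geometric estimate: $H$ is cut out by polynomials of bounded degree, and one bounds fibres of the projection to $\SL_n / H$ by B\'ezout-type arguments. Paired with it is an \emph{escape from subvarieties} principle that exploits the generating hypothesis: since the Zariski closure of the subgroup generated by $A$ is all of $\SL_n$, for any proper subvariety $V \subset \SL_n$ of bounded complexity some element of $A^{O_n(1)}$ lies outside $V$. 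In particular, $A^{O_n(1)}$ contains a regular semisimple element $g$, whose centralizer $T = Z_{\SL_n}(g)$ is a maximal torus.

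With these tools the growth step is a pivot argument generalising Helfgott's $\SL_2$ and $\SL_3$ analyses. Assume for contradiction that $|A^3| \leq K|A|$. By the non-commutative Pl\"unnecke--Ruzsa inequalities, $A^m$ is then a $K^{O(1)}$-approximate group for every fixed $m$, comparable in size to $A$. Conjugating by $g \in A^{O_n(1)}$ yields a second set $gAg^{-1}$ also controlled by $A$. Two cases arise: if $A$ and $gAg^{-1}$ are sufficiently independent then $|A \cdot gAg^{-1}| \gg K|A|$, contradicting the no-growth assumption; if they overlap heavily, then many commutators $[a,g]$ lie in the centralizer $T$, forcing $|A^{O_n(1)} \cap T(k)| \gtrsim |A|$. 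The crucial lemma bounds the latter by $K^{C_n}|A|^{(n-1)/(n^2-1)}$, a contradiction once $|A| \geq K^{C_n}$. The main obstacle, and what makes general $n$ substantially harder than $n = 2,3$, is that for large $n$ the pivot must cope with a rich poset of proper algebraic subgroups (parabolics, Levis, semisimple subgroups), not just maximal tori; handling them all uniformly requires an induction on the dimension of the ambient subgroup, at each stage applying the crucial lemma to the relevant subvariety. This geometric induction is what replaces Helfgott's explicit matrix and trace computations in low rank.
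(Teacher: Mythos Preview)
Your overall reduction to a tripling dichotomy is fine, but the core growth step you sketch does not close, and it diverges from the paper's actual mechanism in an important way.

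First, you have misread what Lemma~\ref{crucial-lemma-sln} is. It is \emph{not} the Larsen--Pink inequality; that is Lemma~\ref{lpi}. Lemma~\ref{crucial-lemma-sln} asserts that the set of maximal tori \emph{involved} with $A$ (those for which $A^2 \cap T$ contains a regular semisimple element) is invariant under conjugation by all of $\SL_n(k)$. This is proved by a double-coset pigeonhole argument: one counts the cosets $xTy$ with $x,y \in A^{O(1)}$, uses the two-sided torus bound $|A^C \cap T| \sim |A|^{1/(n+1)}$ to show there are only $\lesssim |A|^{2n/(n+1)}$ of them, and then invokes the finiteness of the Weyl group $N(T)/T$. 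Once the involved tori form a single $\SL_n(k)$-conjugation orbit of size $\lesssim |A|^{n/(n+1)}$, the orbit--stabiliser theorem against $|N(T)\cap \SL_n(k)| \sim |\SL_n(k)|^{1/(n+1)}$ forces $|A| \gtrsim |\SL_n(k)|$. There is no induction over parabolics, Levis, or a poset of subgroups; only maximal tori and the Larsen--Pink bound on deficient subtori are used.

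Second, your pivot argument has a gap at the sentence ``if they overlap heavily, then many commutators $[a,g]$ lie in the centralizer $T$, forcing $|A^{O_n(1)} \cap T(k)| \gtrsim |A|$.'' Heavy overlap of $A$ and $gAg^{-1}$ means many $a\in A$ satisfy $gag^{-1}\in A$; it does not place $[a,g]$ in $Z(g)=T$. The standard conjugacy-class argument (map $a\mapsto a^{-1}ga$ into $C(g)$ and apply Larsen--Pink to the fibres) only yields $|A^2\cap T|\gtrsim |A|^{1/(n+1)}$, which is perfectly consistent with the Larsen--Pink \emph{upper} bound and gives no contradiction by itself. What is missing from your outline is precisely the idea that converts these matching torus bounds into growth: the conjugation-invariance of the involved tori and the orbit--stabiliser count.
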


In a much longer second paper \cite{helfgott-sl3}, Helfgott proved this result when $n = 3$, at least in the case when $k = \F_p$ is a prime field. It is known, thanks to work of Bourgain-Gamburd \cite{bourgain-gamburd2} and Varj\'u \cite{varju} respectively, that such a statement suffices in order to fully generalise the aforementioned results on expanders and on the affine sieve. We shall state some of these applications and others in \S \ref{app-sec}.

In proving Theorem \ref{mainthm1} we can quote extensively from Helfgott's paper \cite{helfgott-sl3}. However the structure of the proof seems clearer when one establishes the following result of greater generality.

\begin{theorem}\label{mainthm2}
Let $k$ be a finite field and let $G(k)$ be a Chevalley group\footnote{That is, a group associated to one of the Dynkin diagrams $A_n, B_n, C_n, D_n, E_6, E_7, E_8, F_4$ or $G_2$. It is likely that our results hold in greater generality (for example for all simple groups of Lie type) but we have not yet checked this.}. Suppose that $A \subseteq G(k)$ is a $K$-approximate group that generates $G(k)$. Then $A$ is $K^{C_{\dim(G)}}$-controlled by either $\{\id\}$ or by $G(k)$ itself.
\end{theorem}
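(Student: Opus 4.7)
The plan is to establish the dichotomy directly: assuming $A$ is neither tiny (i.e.\ $|A| > K^{C_{\dim G}}$) nor nearly all of $G(k)$ (i.e.\ $|A| < K^{-C_{\dim G}} |G(k)|$), derive a growth contradiction to the $K$-approximate group hypothesis $|A^m| \lesssim |A|$ for bounded $m$. Since $A$ generates $G(k)$, the strategy is to leverage this generation to spread $A$ throughout $G(k)$ via the conjugation dynamics coming from the root system.

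First I would use an Eskin--Mozes--Oh style ``escape from subvarieties'' argument to produce, in some bounded power $A^m$, a regular semisimple element $g$ whose centralizer $T$ is a maximal torus. This is possible because the locus of non-regular-semisimple elements is a proper subvariety of $G$ of bounded degree, and the generating hypothesis prevents $A$ from being trapped in any such subvariety whenever $|A|$ exceeds a $K$-power threshold. Next, invoking Lemma~\ref{crucial-lemma-sln} (the ``crucial lemma'' flagged in the notes added in proof), I would extract an efficient torus piece $A_T \subseteq A^{O(1)} \cap T$ with $|A_T| \gtrsim |A|^{\dim T / \dim G}$, the exponent reflecting a dimensional count against the $g$-conjugacy class.

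The heart of the proof is root-subgroup amplification. For each root $\alpha$ of $G$ with respect to $T$, a similar dimensional pigeonhole produces a substantial piece $A_\alpha \subseteq A^{O(1)} \cap U_\alpha$, where $U_\alpha \cong (k,+)$ is the corresponding root subgroup. The torus acts on $U_\alpha$ by the character $\alpha : T \to k^\times$, so conjugating $A_\alpha$ by $A_T$ realizes $\alpha(A_T) \cdot A_\alpha \subseteq A^{O(1)} \cap U_\alpha$ in additive notation. At this point one applies a sum-product theorem in $\F_q$ (Bourgain--Katz--Tao and Bourgain--Glibichuk--Konyagin), together with an argument ruling out the possibility that $\alpha(A_T)$ is trapped in a proper subfield (again using the generating hypothesis), to conclude that either $|A^{O(1)} \cap U_\alpha|$ is amplified by a polynomial factor $|A_T|^\eta$ for some absolute $\eta > 0$, or $A^{O(1)}$ already saturates $U_\alpha(k)$. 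Iterating against the tripling bound forces saturation. Once $A^{O(1)}$ captures most of one root subgroup, the Weyl group action (realized by short words in other root subgroups, themselves captured via the same escape argument) transports this saturation to every root subgroup, and the Bruhat decomposition expresses every element of $G(k)$ as a bounded product of root subgroup elements, yielding $|A^{O(1)}| \gtrsim |G(k)|$ and contradicting the intermediate size hypothesis.

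The main obstacle, I expect, is the sum-product step in characteristic $p$ for general $q = p^j$: one must rule out the subfield pathology uniformly across all roots of $G$ and across all maximal tori produced by the escape argument, and this is where the $\F_q$-generating hypothesis on $A$ must be converted into a ``non-concentration'' statement at the level of each root character $\alpha$. A secondary difficulty is making the dimensional estimates uniform across the root system of $G$, which is precisely where the Chevalley structure (the existence of a split torus, one-parameter root subgroups, and a Weyl group acting transitively on roots of each length) supplants the case-by-case analysis Helfgott performed in $\SL_2$ and $\SL_3$.
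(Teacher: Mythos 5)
Your proposal takes a genuinely different route from the paper's, and it inherits a gap that the paper's route was designed to avoid.

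The paper's proof of Theorem~\ref{mainthm2} is the argument sketched for $\SL_2$ in Section~\ref{sl2-sec}, transported verbatim: define a maximal torus $T$ to be \emph{involved} with $A$ if $A^2 \cap T$ contains a regular semisimple element; prove (Lemma~\ref{crucial-lemma-sln} and its analogue) that the set of involved tori is conjugation-invariant under all of $G(k)$, using only the Larsen--Pink inequality and the finiteness of the Weyl group $N_G(T)/T$; then bound the number of involved tori by $\lesssim |A|^{(\dim G - \dim T)/\dim G}$, observe that the stabiliser of a single involved torus under the conjugation action has size $\sim |G|^{\dim T/\dim G}$, and conclude $|A| \gtrsim |G|$ via orbit--stabiliser. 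No root subgroups, no Bruhat decomposition, and in particular no sum--product theorem: the paper explicitly flags that, unlike Helfgott's original $\SL_2$ proof, it makes \emph{no} appeal to sum--product, and in fact deduces sum--product as a corollary of Theorem~\ref{mainthm1} together with the Katz--Tao lemma. You have also misattributed Lemma~\ref{crucial-lemma-sln}: it is the conjugation-invariance statement, not the torus-piece extraction (that is Lemma~\ref{torus-lem-sln}, obtained by the conjugacy-class/centraliser fibre argument).

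Your plan is instead a Helfgott-style ``pivoting'' argument: find a regular semisimple element, harvest a torus slice, harvest root-subgroup slices, amplify via sum--product along root characters, saturate one root subgroup, then use the Weyl group and Bruhat decomposition. This is a legitimate strategy in outline (it is roughly what \cite{helfgott-sl2}, \cite{helfgott-sl3} and the later \cite{gill} do), but there are two genuine gaps. First, the step ``a similar dimensional pigeonhole produces a substantial piece $A_\alpha \subseteq A^{O(1)} \cap U_\alpha$'' is not analogous to the torus case and is not a pigeonhole. The torus lower bound comes from the map $x \mapsto x^{-1}ax$ into a conjugacy class, whose fibres are cosets of $Z(a) = T$; there is no comparable canonical map whose fibres are cosets of a root subgroup, and producing a large unipotent piece of $A^{O(1)}$ in a \emph{prescribed} root subgroup is one of the hardest parts of Helfgott's $\SL_3$ paper. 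You would need to supply this construction. Second, the subfield obstruction in $\F_q$, $q = p^j$, which you correctly identify as a difficulty, is not a detail: ruling out that each $\alpha(A_T)$ concentrates near a proper subfield, uniformly over roots and over the tori your escape argument produces, requires a non-concentration statement at the level of each root character, and it is not clear that the hypothesis ``$A$ generates $G(\F_q)$'' propagates to that level without substantial extra work. The paper's torus-counting route bypasses both of these issues entirely, which is precisely why the authors adopted it; this is the main structural efficiency your proposal misses.
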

Note that the constant $C_{\dim(G)}$ does not depend on the field $k$.

There is a more significant advantage of working in simple groups more general that $\SL_n$. Over $\C$, for example, the general structure theory of algebraic groups implies, roughly speaking, that every Zariski-closed subgroup of $\GL_n(\C)$ admits a quotient by a normal solvable subgroup which is a direct product of simple complex Lie groups. By exploiting this theory, an analogue of Theorem \ref{mainthm1} over $\C$, and the main result of \cite{bg-2}, we are able to establish the following result.

\begin{theorem}\label{mainthm3}
Suppose that $A \subseteq \GL_n(\C)$ is a $K$-approximate subgroup. Then $A$ is $CK^{C_n}$-controlled by $B$, a $CK^{C_n}$-approximate group that generates a nilpotent group of nilpotency class \textup{(}step\textup{)} at most $n-1$.
\end{theorem}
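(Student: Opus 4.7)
The plan is to reduce the problem to the solvable case and then appeal to previously established results. Let $\Gamma = \langle A \rangle$ and let $G := \overline{\Gamma}^{\mathrm{Zar}} \subseteq \GL_n(\C)$ be its Zariski closure, a complex linear algebraic group. Let $R \trianglelefteq G$ be the solvable radical; the quotient $G/R$ is semisimple, so up to a central isogeny and a subgroup of bounded index it decomposes as a product $S_1 \times \cdots \times S_\ell$ of connected simple complex algebraic groups, each of dimension at most $n^2$.

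For each $i$, let $\pi_i \colon G \to S_i$ denote the projection. Then $\pi_i(A)$ is a $K$-approximate subgroup of $S_i(\C)$ whose generated subgroup is Zariski-dense, so the complex analogue of Theorem \ref{mainthm1} (flagged as available in the paragraph preceding the statement) forces $\pi_i(A)$ to be $K^{C_n}$-controlled either by $\{\id\}$ or by $S_i(\C)$ itself. The second alternative is precluded because $S_i(\C)$ is infinite while $\pi_i(A)$ is finite, so $|\pi_i(A)| \leq K^{C_n}$ for every $i$. Taking the diagonal, the image $\overline{A}$ of $A$ in $G/R$ has cardinality at most $K^{C'_n}$.

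Next, I would apply a Ruzsa-covering and pigeonhole argument to the fibres of the map $\pi \colon G \to G/R$ restricted to a bounded power $A^k$, in order to extract a subset $A' \subseteq A^k$ that is itself a $K^{C_n}$-approximate group, lies entirely inside $R(\C)$, has $|A'| \gtrsim |A|$, and $K^{C_n}$-controls $A$. Since $A'$ now lies in a solvable complex linear group, the main theorem of \cite{bg-2} produces a $K^{C_n}$-approximate group $B$ which $K^{C_n}$-controls $A'$ (and hence $A$) and generates a nilpotent subgroup of $\GL_n(\C)$. Finally, any nilpotent subgroup of $\GL_n(\C)$ has nilpotency class at most $n-1$: by Lie-Kolchin applied to the Zariski closure of its connected component, it is simultaneously conjugate into the upper-triangular group, whose unipotent radical has class exactly $n-1$.

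The main obstacle is the extraction of the solvable piece $A'$ from $A$ in a way that loses only polynomial factors in $K$, while correctly accounting for the finite-index subgroups and finite central isogenies that appear in the structure theory of $G/R$; keeping the exponents in $K^{C_n}$ uniform across this reduction requires some bookkeeping. A secondary obstacle is that the argument relies on the complex analogue of Theorem \ref{mainthm1}, i.e.\ the classification of approximate subgroups of simple complex algebraic groups, which although announced in the preceding text is itself a nontrivial input.
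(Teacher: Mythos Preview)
Your proposal is correct and follows essentially the same route as the paper's Section~6: pass to an ambient algebraic group $G$, bound the image of $A$ in the semisimple quotient $G/\mathrm{Rad}(G)$ by the simple (complex) case, reduce to a solvable approximate group, and invoke \cite{bg-2}. The only point on which the paper's sketch differs in emphasis is the first step: rather than taking $G$ to be the Zariski closure of $\langle A\rangle$ outright, the paper runs a dimension-descent argument to produce a $G$ of \emph{bounded complexity} in which $A$ is \emph{sufficiently} Zariski-dense (the hypothesis actually needed for the Larsen--Pink/simple-group input); you implicitly rely on this when you invoke the complex analogue of Theorem~\ref{mainthm1} on each $S_i$, so it is worth making explicit.
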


We are also able to say something about the structure of $K$-approximate subgroups of $\GL_n(k)$ where $k$ is a finite field, at least in the case $k = \F_p$, but this seems to be very substantially more difficult and it is not yet clear what the final form of such a result will be.

Qualitative forms of the above theorems follow from the work of Hrushovski \cite{hrush}. The main novelty of our work lies in the polynomial dependence on the approximation parameter $K$, which is absolutely essential for applications. The powers $C_n$ and $\dim(G)$ are in principle explicitly computable. However, if one is willing to sacrifice such information a key portion of the argument (the proof of the Larsen-Pink inequalities, described in \S \ref{larsen-pink-sec}) can be significantly simplified by the use of an ultrafilter argument.

\section{A new proof of Helfgott's $\SL_2$ result}\label{sl2-sec}

To illustrate the main ideas in our paper, we give a reasonably detailed sketch of the proof of Theorem \ref{mainthm1} in the case $n = 2$. The result is due to Helfgott, at least when $k = \F_p$ is a prime field. So are some of the ideas, and indeed we begin by quoting a lemma from \cite{helfgott-sl2}. Recall that a \emph{maximal torus} of $\SL_2(\overline{k})$ a maximal, connected, abelian, diagonalisable algebraic subgroup of $\SL_2(\overline{k})$, or in other words a subgroup conjugate to the group $\{ \left( \begin{smallmatrix} \lambda & 0 \\ 0 & 1/\lambda\end{smallmatrix} \right) : \lambda \neq 0\}$ of diagonal matrices.

Throughout this section, $k$ is a finite field and $A \subseteq \SL_2(k)$ is a $K$-approximate group generating $\SL_2(k)$. We make free use of the notation $\sim$, $\lesssim$, $\gtrsim$ introduced in the introduction, all with reference to this same parameter $K$.

\begin{lemma}[Helfgott]\label{torus-lem}
Let $T \subseteq \SL_2(\overline{k})$ be a maximal torus containing at least one element of $A$ other than the identity. Then
\[ |A^{20} \cap T| \sim |A|^{1/3}.\]
\end{lemma}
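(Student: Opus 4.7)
The exponent $\tfrac{1}{3} = \dim(T)/\dim(\SL_2)$ identifies this as a Larsen--Pink style estimate: the $1$-dimensional subvariety $T$ of the $3$-dimensional algebraic group $\SL_2$ should intersect any approximate subgroup in roughly $|A|^{1/3}$ elements. I would establish the upper and lower bounds separately.

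\medskip

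\noindent\emph{Upper bound} $|A^{20} \cap T| \lesssim |A|^{1/3}$. First I would locate $g_1, g_2 \in A^{C_0}$ such that the three maximal tori $T_0 := T$, $T_1 := g_1 T g_1^{-1}$, $T_2 := g_2 T g_2^{-1}$ are \emph{in general position}, meaning that their Lie algebras $\mathfrak{t}$, $\operatorname{Ad}(g_1)\mathfrak{t}$, $\operatorname{Ad}(g_2)\mathfrak{t}$ jointly span $\mathfrak{sl}_2$. The set of pairs $(g_1, g_2) \in \SL_2 \times \SL_2$ violating this condition is a proper subvariety of bounded degree, and since $A$ generates $\SL_2(k)$, an \emph{escape from subvarieties} argument produces such $g_1, g_2$ in $A^{C_0}$ for an absolute constant $C_0$. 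General position forces the multiplication map
\[
\mu\colon T_0 \times T_1 \times T_2 \to \SL_2, \quad (s_0,s_1,s_2) \mapsto s_0 s_1 s_2,
\]
to be dominant between $3$-dimensional varieties, hence generically finite of bounded degree. Restricted to $S_i := A^N \cap T_i$ for a suitable fixed $N$ (each of which contains a conjugate copy of $A^{20} \cap T$), the image of $\mu$ lies in $A^{3N + O(1)}$; by Pl\"unnecke--Ruzsa $|A^{3N+O(1)}| \lesssim |A|$, and the bounded generic fiber size of $\mu$ yields $|A^{20} \cap T|^3 \lesssim |S_0|\,|S_1|\,|S_2| \lesssim |A|$.

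\medskip

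\noindent\emph{Lower bound} $|A^{20} \cap T| \gtrsim |A|^{1/3}$. Take $t \in (A \cap T) \setminus \{\id\}$, which exists by hypothesis. Since $t$ is regular semisimple, its centralizer in $\SL_2(\overline{k})$ is exactly $T$. The conjugation map $g \mapsto g t g^{-1}$ sends $A$ into the $2$-dimensional conjugacy class $C_t$ and into $A^3$; two elements $g, h \in A$ collide iff $g^{-1} h \in T$, so the fibers are cosets of the form $A \cap gT$, each of size at most $|A^2 \cap T|$. Thus
\[
|A^3 \cap C_t| \;\geq\; |\{g t g^{-1} : g \in A\}| \;\geq\; \frac{|A|}{|A^2 \cap T|}.
\]
Applying an auxiliary Larsen--Pink bound $|A^3 \cap C_t| \lesssim |A|^{2/3}$ for the $2$-dimensional subvariety $C_t$ then gives $|A^{20} \cap T| \geq |A^2 \cap T| \gtrsim |A|^{1/3}$.

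\medskip

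\noindent\emph{Main obstacle.} The hardest ingredient is the Larsen--Pink bound for the $2$-dimensional conjugacy class $C_t$ used in the lower bound; unlike the torus case the dimensions do not partition into three integer pieces of size $\dim T$, so a direct three-factor analogue is unavailable. One approach is to bootstrap against the torus bound via a dominant map $C_t \times T \to \SL_2$: this yields $|A^3 \cap C_t| \cdot |A^{20} \cap T| \lesssim |A|$, which combines with the already-proven upper bound for $T$ to supply the desired $|A|^{2/3}$ estimate; alternatively one invokes the machinery of \S\ref{larsen-pink-sec} directly. The quantitative escape-from-subvarieties step is also non-trivial: one must bound the number of multiplications required to leave a proper subvariety purely in terms of its degree, independently of $k$ and of $|A|$, a fact that ultimately rests on the generating hypothesis together with the simplicity of the ambient group.
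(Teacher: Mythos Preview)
Your approach is essentially the paper's own. The paper attributes the lemma to Helfgott, whose upper-bound argument is precisely your three-torus multiplication trick, and in \S\ref{larsen-pink-sec} re-derives both bounds from the Larsen--Pink inequality exactly along your lines for the lower bound: conjugate $t$ by $A$ into the $2$-dimensional conjugacy class $C_t \cap A^3$, bound $|A^3 \cap C_t| \lesssim |A|^{2/3}$ by Larsen--Pink, and read off $|A^2 \cap T| \gtrsim |A|^{1/3}$ from the fibre sizes.

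One logical slip in your ``main obstacle'' paragraph: the bootstrap you propose, $|A^3 \cap C_t| \cdot |A^{20} \cap T| \lesssim |A|$ together with the already-proven torus \emph{upper} bound $|A^{20} \cap T| \lesssim |A|^{1/3}$, does not yield an upper bound on $|A^3 \cap C_t|$; for that direction you would need the torus \emph{lower} bound, which is exactly what you are in the process of proving. So this particular shortcut is circular, and the alternative you offer---invoking the general Larsen--Pink machinery of \S\ref{larsen-pink-sec} directly for the conjugacy class---is indeed what is required (and is what the paper does).
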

The upper bound is \cite[Corollary 5.4]{helfgott-sl3}, whilst the lower bound follows from \cite[Corollary 5.10]{helfgott-sl3} (or rather the proof of it). Helfgott does not explicitly obtain the power $20$ but it is likely that this would follow from his arguments; in any case the exact power is totally unimportant for this sketch.

Helfgott established an analogous result in $\SL_n$ for $n \geq 3$ under the assumption that $T$ contains a \emph{regular semisimple} element of $A$, that is to say a matrix with distinct eigenvalues. Note that a special feature of $\SL_2$ is that all semisimple (diagonalisable) elements other than $\pm \id$ are regular semisimple. One of the main ingredients of our paper is a further generalisation of the upper-bound portion of Lemma \ref{torus-lem} to the case where $\SL_2$ is replaced by an arbitrary simple group of Lie type and $T$ is replaced by an arbitrary algebraic subvariety $V$ of bounded ``complexity''. To do this we adapt an argument of Larsen and Pink \cite{larsen-pink}; the details are sketched in \S \ref{larsen-pink-sec}. In the special case that $V$ is a torus containing a regular semisimple element of $A$ we can obtain a corresponding \emph{lower} bound by placing an upper bound on the intersection of $A^{O(1)}$ with conjugacy classes, much as Helfgott does in \cite{helfgott-sl2,helfgott-sl3}. This is discussed further in \S \ref{larsen-pink-sec}, and in this way we obtain a complete generalisation of Lemma \ref{torus-lem} to arbitrary Chevalley groups.

Returning now to the $\SL_2$ setting, let us say that a torus $T \subseteq \SL_2(k)$ is \emph{involved} with $A$ if $A^2 \cap T$ contains at least one regular semisimple element of $A$ (which, in the case of $\SL_2(k)$, is equivalent to asserting that $A^2 \cap T$ is not contained in $\{ \id, -\id\}$). The crucial observation concerning this notion is as follows.

\begin{lemma}[Conjugation invariance in $\SL_2$]\label{crucial-lemma}
Suppose that $|A| > CK^C$ for a sufficiently large $C$. Then the set of involved tori is invariant under conjugation by all elements of $\SL_2(k)$.
\end{lemma}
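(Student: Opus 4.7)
The plan is to reduce to invariance under conjugation by a single element of $A$, then to combine Lemma \ref{torus-lem} with a Ruzsa covering and ratio argument to transport a large intersection with $T$ to the conjugate torus and deflate the exponent back down to $2$.

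Since $A$ generates $\SL_2(k)$, every $g \in \SL_2(k)$ is a word $a_1 \cdots a_n$ in elements of $A$, and conjugation by $g$ factors as the composition of conjugations by the $a_i$. It therefore suffices to prove: if $T$ is involved and $a \in A$, then $aTa^{-1}$ is also involved; iteration then yields $\SL_2(k)$-invariance. Fix such $T$, with regular semisimple witness $t_0 \in A^2 \cap T$, and fix $a \in A$. The naive candidate $a t_0 a^{-1}$ lies in $A^4 \cap aTa^{-1}$ rather than in $A^2 \cap aTa^{-1}$, so the heart of the matter is to shrink the exponent from $4$ back to $2$.

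The first step is to produce a large intersection of a bounded power of $A$ with $T$. Applying Lemma \ref{torus-lem} to $A^2$, viewed as a $K^{O(1)}$-approximate group of size comparable to $|A|$ whose intersection with $T$ contains the non-identity element $t_0$, yields $|A^{40} \cap T| \gtrsim |A|^{1/3}$ (the lemma gives $|(A^2)^{20} \cap T| \sim |A^2|^{1/3}$, and the $K^{O(1)}$ losses are absorbed into $\gtrsim$). Conjugating by $a$ gives $|A^{42} \cap aTa^{-1}| \gtrsim |A|^{1/3}$. A Ruzsa covering $A^{42} \subseteq Y \cdot A$ with $|Y| \lesssim 1$ followed by pigeonhole then produces $y \in Y$ with $|yA \cap aTa^{-1}| \gtrsim |A|^{1/3}$; fixing $y_0$ in this set and forming $y_0^{-1} y'$ as $y'$ varies in it gives $\gtrsim |A|^{1/3}$ distinct elements of $(yA)^{-1}(yA) \cap aTa^{-1} = A^{-1} A \cap aTa^{-1} = A^2 \cap aTa^{-1}$, using symmetry of $A$ and the group structure of $aTa^{-1}$.

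Since the only non-regular-semisimple elements in any torus of $\SL_2(k)$ are $\id$ and $-\id$, the bound $|A^2 \cap aTa^{-1}| \gtrsim |A|^{1/3}$ exceeds $2$ whenever $|A| > C K^C$ with $C$ large enough, forcing $A^2 \cap aTa^{-1}$ to contain an honest regular semisimple element and proving that $aTa^{-1}$ is involved. The only subtle point is the application of Lemma \ref{torus-lem} in the situation where only $A^2$ (rather than $A$) is known to intersect $T$ nontrivially; this is handled either by working with the enlargement $A^2$ as above or by inspecting Helfgott's proof to verify robustness under bounded replacement of $A$ by $A^k$. The rest is a routine covering-and-ratios argument.
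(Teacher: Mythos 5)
Your proof is correct, and it takes a genuinely different route from the paper's. Both proofs begin with the same reduction to a single conjugator $a \in A$, and both ultimately rest on the lower-bound half of Lemma \ref{torus-lem}. The difference is in how one transfers information from $T$ to the conjugate torus $\tilde T = aTa^{-1}$ while staying at exponent $2$. The paper never applies Lemma \ref{torus-lem} to $\tilde T$ directly: it double-counts the cosets $xTy$ with $x,y\in A^2$, bounding their number by $\lesssim |A|^{4/3}$ using the lower bound on $|A^{O(1)}\cap T|$, notes that each coset $x\tilde T y$ with $x,y\in A$ equals $(xa^{-1})T(ay)$ and hence is among those already counted, and then after a pigeonhole obtains $\gtrsim |A|^{1/3}$ elements $b_i$ normalising $\tilde T$; finiteness of the Weyl group $N(\tilde T)/\tilde T$ then forces $\gtrsim |A|^{1/3}$ of the $b_i^{-1}b_j$ to land in $A^2\cap\tilde T$. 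Your proof instead applies Lemma \ref{torus-lem} to the $K^{O(1)}$-approximate group $A^2$ (for which $t_0$ is a legitimate witness), conjugates by $a$ to get $|A^{42}\cap\tilde T|\gtrsim |A|^{1/3}$, and then deflates the exponent back to $2$ via a covering $A^{42}\subseteq Y\cdot A$ with $|Y|\lesssim 1$ and a difference-set trick inside the subgroup $\tilde T$. Both arguments are sound and both generalise to $\SL_n$ modulo the extra deficient-torus estimate needed there (cf. Lemma \ref{crucial-lemma-sln}). The paper's version isolates Weyl-group finiteness as the structural input and never needs to enlarge $A$ before invoking Lemma \ref{torus-lem}; yours trades that for a standard covering-and-ratios argument at the cost of a slightly more liberal (but legitimate) reading of Lemma \ref{torus-lem} with $A$ replaced by $A^2$, which you rightly flag as the only delicate point.
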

\begin{proof} Since $A$ generates $\SL_2(k)$, it suffices to show that if $T$ is involved and if $a \in A$ then $\tilde T := a^{-1}T a$ is also involved.  Let us look at the sets $x T y$ as $x, y$ range over $A^{2}$.
These are all cosets of tori in $\SL_2(\overline{k})$ and, by Lemma \ref{torus-lem}, there are $\lesssim |A|^{4/3}$ of them, since for each fixed $x,y \in A^2$ there are $\gtrsim |A|^{1/3}$ values of $x' \in A^{22}$ with $xT=x'T$ and $\gtrsim |A|^{1/3}$ values of $y' \in A^{22}$ with $Ty = Ty'$, yet the total number of cosets $x'Ty'$ with $x', y' \in A^{22}$ is at most $|A^{22}|^2\lesssim |A|^2$. \emph{A fortiori} there are $\lesssim |A|^{4/3}$ sets of the form $x \tilde T y $ as $x,y$ range over $A$.

By the pigeonhole principle, we can thus find $m \gtrsim |A|^{2/3}$ distinct pairs $(b_i, c_i) \in A \times A$, $i = 1, \dots,m$, such that
\begin{equation}\label{star} b_1 \tilde T c_1 = \dots = b_m \tilde T c_m.\end{equation}
Without loss of generality there are $l\gtrsim |A|^{1/3}$ different values of $b_i$ appearing here, say $b_1,\dots, b_{l}$.  Since each $b_i \tilde T b_i^{-1}$ is a subgroup, it is easy to see that \eqref{star} in fact implies that
\[ b_1 \tilde T b_1^{-1} = b_2 \tilde T b_2^{-1} = \dots = b_{l}\tilde T b_{l}^{-1},\] and hence that $b_1^{-1} b_i$ lies in the normaliser $N_{\SL_2}(\tilde T)$ for all $i = 1,\dots,l$.  But the order of the group $N_{\SL_2}(\tilde T)/\tilde T$ is two (one can easily compute this directly: this group is called the \emph{Weyl group}), and therefore we may find some fixed $i$ and at least $l/2$ values of $j$ such that $(b_1^{-1} b_i)^{-1}(b_1^{-1} b_j) = b_i^{-1} b_j \in \tilde T$. This implies that $|A^2 \cap \tilde T| \gtrsim |A|^{1/3}$ and hence, \emph{a fortiori}, $\tilde T$ is involved. \end{proof}

\emph{Remark.} The arguments here were inspired by the proof that every approximate field is almost a field, a general form of the sum-product theorem.  The idea originated in \cite{bourgain-glibichuk-konyagin} and is also described in \cite[Section 2.8]{tv-book}.  Indeed, if $A$ is a finite subset of a field $k$, say that an element $\xi \in k$ is \emph{involved} with $A$ if the set $A + \xi A$ has cardinality strictly less than $|A|^2$.  If $A$ is approximately closed under addition and multiplication in a certain sense, one can show by arguments not dissimilar to those above that the set of involved elements is finite and is closed under addition and multiplication, i.e. is a genuine finite subfield of $k$. Note that Helfgott used in \cite{helfgott-sl3} an argument not dissimilar to ours when he recast the sum-product phenomenon in terms growth in groups acted upon by an abelian automorphism group. A common feature to all these phenomena is the presence of two distinct, uncorrelated actions (e.g. mutliplication and addition, or left and right translations) whose combination produces growth.

This might also be a good place to mention that, unlike Helfgott in \cite{helfgott-sl2}, we make no appeal to results on the sum-product phenomenon. In fact we are able to deduce the sum-product phenomena from Theorem \ref{mainthm1} in the case $n = 2$ and the so-called Katz-Tao lemma \cite[Lemma 2.53]{tv-book}, thereby providing a different proof of these results.\vspace{11pt}

Given Lemma \ref{crucial-lemma} it is quite an easy matter to complete the proof of Theorem \ref{mainthm1} in the case $n = 2$. Consider the conjugation action of $G = \SL_2(k)$ on the set $X = \{T_1,\dots,T_m\}$ of involved tori (note that these tori are defined over $\overline{k}$, and need not be defined over $k$). By another application of Lemma \ref{torus-lem} and the fact that distinct tori intersect only at $\{\id, -\id\}$ (a special feature of $\SL_2$) it follows that $m \lesssim |A|^{2/3}$.

Thus the orbit of $T_1$ under this conjugation action has size at most $m \sim |A|^{2/3}$, whereas the stabiliser of $T_1$ is $N(T_1) \cap G$, a set of size $\sim |G|^{1/3}$ (by direct verification). It follows from the orbit-stabiliser theorem that
\[ |A|^{2/3}  |G|^{1/3} \gtrsim |G|,\] and therefore $|A| \gtrsim |G|$. This means, of course, that $A$ is $K^C$-controlled by $G$, and the proof is complete.\endproof

\section{Sets that generate $\SL_n$}

Let $k$ be a finite field. Let us see what changes must be made to the argument of the previous section in order to make it work when $n \geq 3$, that is to say to prove Theorem \ref{mainthm1} in general. As previously mentioned, the analogue of Lemma \ref{torus-lem} is the following. Through this section $A \subseteq \SL_n(k)$ is a $K$-approximate group which generates $\SL_n(k)$. All absolute constants, including those implicit in the $\sim$, $\lesssim$ and $\gtrsim$ notation, are allowed to depend on $n$.

\begin{lemma}[Helfgott]\label{torus-lem-sln}
Suppose that $T\subseteq \SL_n(\overline{k})$ is a maximal torus containing at least one regular semisimple element of $A$. Then
\[ |A^{C} \cap T| \sim |A|^{1/(n+1)}.\]
\end{lemma}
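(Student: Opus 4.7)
The plan is to deduce both the upper and lower bounds from a Larsen--Pink type inequality, of the form
\[ |A^C \cap V| \lesssim |A|^{\dim V/\dim G} \]
for any subvariety $V \subseteq G = \SL_n(\overline k)$ of bounded complexity. This inequality is the key analogue of the upper bound in Lemma \ref{torus-lem}; it will be proved by the adaptation of Larsen--Pink described in \S\ref{larsen-pink-sec} and should be treated as a black box here. Once it is in hand, both halves of the lemma fall out cleanly, using $\dim G = n^2-1$, $\dim T = n-1$, and $\dim(\operatorname{cl}(a)) = \dim G - \dim T = n(n-1)$ for a regular semisimple $a$, all of which give exponents that are the correct multiples of $1/(n+1)$.

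For the upper bound, I apply the Larsen--Pink inequality directly with $V = T$. This gives
\[ |A^C \cap T| \lesssim |A|^{(n-1)/(n^2-1)} = |A|^{1/(n+1)} ,\]
which is the desired bound.

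For the lower bound, I use Helfgott's orbit-stabiliser trick. Pick the regular semisimple $a \in A \cap T$ guaranteed by the hypothesis; since $a$ is regular semisimple, its centraliser in $G$ is exactly $T$. Consider the conjugation map $\phi : A \to \operatorname{cl}(a)$ defined by $\phi(x) = xax^{-1}$. The fibre of $\phi$ over $xax^{-1}$ is $A \cap xT$, and since $A$ is symmetric, if this intersection contains some $y$, then for every $z \in A\cap xT$ we have $y^{-1}z \in A^2 \cap T$; hence each fibre has size at most $|A^2 \cap T|$. Therefore
\[ |\phi(A)| \geq |A|/|A^2 \cap T|. \]
On the other hand $\phi(A) \subseteq A^3 \cap \operatorname{cl}(a)$, and $\operatorname{cl}(a)$ is a bounded-complexity subvariety of $G$ of dimension $n(n-1)$, so the Larsen--Pink inequality yields
\[ |A^3 \cap \operatorname{cl}(a)| \lesssim |A|^{n(n-1)/(n^2-1)} = |A|^{n/(n+1)}. \]
Combining the last two displays gives $|A^2 \cap T| \gtrsim |A|^{1/(n+1)}$, which subsumes the claimed lower bound (with $C \geq 3$, say).

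The only genuine obstacle in this argument is the Larsen--Pink inequality itself, which is where the real content lies and where the dependence on $n$ (the ``complexity'' of $T$ and of the conjugacy class) must be made effective. Every other step is a short combinatorial or orbit-stabiliser manipulation. One minor subtlety is ensuring that $\operatorname{cl}(a)$ and $T$ have the same complexity bound independent of the specific regular semisimple $a$ chosen; this is automatic, as the conjugacy class of a regular semisimple element in $\SL_n$ is cut out by a bounded number of polynomial equations (fixing the characteristic polynomial), and maximal tori are likewise described by a bounded amount of data.
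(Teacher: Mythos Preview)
Your proof is correct and follows essentially the same route as the paper. The paper attributes the lemma to Helfgott but, in \S\ref{larsen-pink-sec}, explains exactly the derivation you give: apply the Larsen--Pink inequality with $V=T$ (dimension $n-1$) for the upper bound, and with $V$ the conjugacy class of a regular semisimple $a\in A\cap T$ (dimension $n(n-1)$) together with the conjugation map $x\mapsto x^{-1}ax$ and a fibre/orbit--stabiliser count for the lower bound; the only cosmetic differences are that the paper conjugates by $x^{-1}(\cdot)x$ rather than $x(\cdot)x^{-1}$ and phrases the fibre estimate via pigeonhole rather than your uniform bound $|A\cap xT|\le |A^2\cap T|$.
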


Let us now say that a torus $T \subseteq \SL_n(\overline{k})$ is \emph{involved} with $A$ if $A^2 \cap T$ contains at least one regular semisimple element. We have the following direct analogue of Lemma \ref{crucial-lemma}.

\begin{lemma}\label{crucial-lemma-sln}
Suppose that $|A| \geq CK^C$ for sufficiently large $C = C_n$. Then the set of involved tori is invariant under conjugation by the whole group $\SL_n(k)$.
\end{lemma}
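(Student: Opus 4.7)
My plan is to follow the strategy of Lemma \ref{crucial-lemma} for $\SL_2$, with Lemma \ref{torus-lem-sln} supplying the exponent $1/(n+1)$ in place of $1/3$ and the Weyl group $N_{\SL_n}(\tilde T)/\tilde T \cong S_n$ of order $n!$ replacing the two-element Weyl group of $\SL_2$.

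Fix an involved torus $T$ and an element $a\in A$, and set $\tilde T := a^{-1}Ta$. A conjugate of the regular semisimple witness for $T$ lies in $\tilde T \cap A^{O(1)}$, so Lemma \ref{torus-lem-sln} (applied to a bounded power of $A$) gives $|A^C \cap \tilde T| \sim |A|^{1/(n+1)}$. I would then count the cosets $x\tilde T y$ as $(x,y)$ ranges over $A \times A$: each such coset has $\gtrsim |A|^{2/(n+1)}$ preimages in $A^C \times A^C$, of the form $(xt_1, t_2 y)$ with $t_1, t_2 \in A^{C-1} \cap \tilde T$, while the total number of pairs in $A^C \times A^C$ is $\lesssim |A|^2$. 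Hence the set of distinct cosets $x\tilde T y$ with $x, y \in A$ has cardinality $\lesssim |A|^{2n/(n+1)}$, and pigeonholing the $|A|^2$ pairs $(x,y) \in A \times A$ gives $m \gtrsim |A|^{2/(n+1)}$ pairs $(b_i, c_i) \in A \times A$ all producing the same coset $V = b_i \tilde T c_i$.

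A double-pigeonhole now forces one of the following: either some first coordinate $b$ appears with $\gtrsim |A|^{1/(n+1)}$ distinct partners $c_i$, in which case $c_i c_j^{-1} \in \tilde T$ for all such partners yields $|A^2 \cap \tilde T| \gtrsim |A|^{1/(n+1)}$ at once, or else there are $l \gtrsim |A|^{1/(n+1)}$ distinct values $b_1, \dots, b_l$ among the first coordinates. In the latter case, exactly as in Lemma \ref{crucial-lemma} the equalities $b_i \tilde T c_i = b_j \tilde T c_j$, together with the fact that each $b_i \tilde T b_i^{-1}$ is a subgroup containing the identity, force all $b_i \tilde T b_i^{-1}$ to coincide, so $b_1^{-1} b_i \in N_{\SL_n}(\tilde T)$ for every $i$. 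Since $|N_{\SL_n}(\tilde T)/\tilde T| = n!$, pigeonhole puts $\geq l/n!$ of the $b_1^{-1} b_i$ into a single coset of $\tilde T$, and the ratios $b_{i_1}^{-1} b_{i_k}$ (which lie in $A^2$ by symmetry of $A$) yield $|A^2 \cap \tilde T| \gtrsim |A|^{1/(n+1)}$ in this case as well.

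It remains to promote this lower bound to the existence of a \emph{regular semisimple} element in $A^2 \cap \tilde T$; this is the main step with no direct analogue in the $\SL_2$ argument, since in $\SL_2$ the non-regular locus is just $\{\pm\id\}$ and the mere size lower bound was already enough. Over $\SL_n$ the non-regular locus in $\tilde T$ is a union of $\binom{n}{2}$ subtori of dimension $n-2$, cut out by the coincidence relations $\lambda_i = \lambda_j$ between eigenvalues. By the Larsen--Pink style inequalities developed in \S\ref{larsen-pink-sec}, the intersection of $A^{O(1)}$ with any such subvariety has size $\lesssim |A|^{(n-2)/(n^2-1)}$, which is smaller than $|A|^{(n-1)/(n^2-1)} = |A|^{1/(n+1)}$ by a factor $|A|^{1/(n^2-1)}$. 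Provided $|A| \geq CK^C$ with $C = C_n$ sufficiently large, the regular semisimple elements of $A^2 \cap \tilde T$ outnumber the non-regular ones, so at least one exists and $\tilde T$ is involved.
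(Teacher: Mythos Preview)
Your proof is correct and follows essentially the same route as the paper's: the coset-counting and pigeonhole argument of Lemma~\ref{crucial-lemma} with exponent $1/(n+1)$ and Weyl group of order $n!$, followed by the Larsen--Pink bound on the deficient subtori of dimension $n-2$ to force a regular semisimple element into $A^2\cap\tilde T$. The only cosmetic difference is that the paper applies Lemma~\ref{torus-lem-sln} to the original torus $T$ (counting cosets $xTy$ with $x,y\in A^2$) and then transfers to $\tilde T$ via the identity $x\tilde T y = (xa^{-1})T(ay)$, whereas you apply the torus lemma directly to $\tilde T$ using the conjugated witness $a^{-1}ta\in A^4\cap\tilde T$; both yield the same coset bound $\lesssim |A|^{2n/(n+1)}$.
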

\proof The argument is almost exactly the same as that used to prove Lemma \ref{crucial-lemma}. Everything works in precisely the same way, using the fact that the Weyl group $N_G(T)/T$ is finite and bounded in size independently of $k$, up until the very last line where it is shown that $|A^2 \cap \tilde T| \gtrsim |A|^{1/(n+1)}$. It does not follow quite so immediately from this that $\tilde T$ is involved. Writing $\tilde T_{\rss}$ for the set of regular semisimple elements of $\tilde T$, it suffices to show that
\[ |A^2 \cap (\tilde T \setminus \tilde T_{\rss})| \lesssim |A|^{1/(n+1) - \delta}\]
for some $\delta = \delta_n > 0$.
The set $\tilde T \setminus \tilde T_{\rss}$ is contained in a finite union of ``deficient'' subtori of dimension at most $n-2$ (note that the dimension of a maximal torus is $n-1$). Hence the key is to bound the number of points of $A^2$ inside a deficient torus by $\lesssim |A|^{1/(n+1) - \delta}$.

A bound of this type follows from the rather general Larsen-Pink-type inequalities to be presented in the next section, though it may well also follow by modifying Helfgott's approach in \cite[Section 5]{helfgott-sl3} (which does, in any case, resemble the Larsen-Pink argument).\endproof

To finish the proof of Theorem \ref{mainthm1} we proceed much as before, examining the conjugation action of $G = \GL_n(k)$ on the set $X = \{T_1,\dots,T_m\}$ of involved maximal tori. There is one new issue here, which is that it is not immediately clear that there is even \emph{one} involved torus. That this is so follows from an application of the ``escape from subvarieties'' Lemma of Eskin-Mozes-Oh \cite{emo}, much used in Helfgott's papers. This lemma guarantees that, after replacing $A$ by $A^{C}$ if necessary, that a positive proportion of the elements of $A$ are regular semisimple.

To obtain the bound $m \lesssim |A|^{n/(n+1)}$ we need only combine Lemma \ref{torus-lem-sln} with the observation that each regular semisimple element of $A$ lies on a \emph{unique} maximal torus. The rest of the argument proceeds as before.\endproof

\section{A Larsen-Pink-type inequality}\label{larsen-pink-sec}

Suppose that $A \subseteq G(k)$, where $G$ is a Chevalley group. In the last section we saw, in the case $G = \SL_n$, how useful it is to have upper bounds on $|A \cap V|$ for various varieties $V$, specifically maximal tori, deficient tori and conjugacy classes (the latter to get the lower bounds for tori in Lemmas \ref{torus-lem} and \ref{torus-lem-sln}, as described below). It turns out that bounds of this type are available in considerable generality. To obtain them, we adapt an argument of Larsen and Pink \cite{larsen-pink}. The idea of using the Larsen-Pink inequality was inspired by a similar adaptation of that inequality to a model-theoretic setting in \cite{hrush}.

Thus far, we have been dealing only with finite fields $k$. When $k$ is uncountably infinite it makes no sense to say that $A$ generates $G(k)$. It turns out that the correct substitute is the notion of \emph{sufficiently Zariski-dense}. We say that $A \subseteq G(\overline{k})$ is $M$-sufficiently Zariski dense if $A$ is not contained in any proper subvariety  of \emph{complexity at most $M$}, that is to say defined by at most $M$ polynomials of degree at most $M$. The notion of sufficiently Zariski-dense takes a little getting used to. For example, $A = \SL_2(\F_p)$ is $M$-sufficiently Zariski-dense in $\SL_2(\overline{\F}_p)$ when $M \geq M_0(p)$, even though $A$ generates only a very tiny portion of $\SL_2(\overline{\F}_p)$.

\begin{lemma}[Larsen-Pink inequality for approximate groups]\label{lpi}
Let $k$ be any field, suppose that $G(k)$ is a Chevalley group, and let $M$ be a parameter. Then there is an $M_0 = M_0(G,M)$ with the following property. Suppose that $A$ is $M_0$-sufficiently Zariski-dense. Then for any subvariety $V$ of complexity at most $M$ we have
\[ |A \cap V| \leq C |A^C|^{\dim V/\dim G}\]
where $C = C(G,M)$.  In particular, if $A$ is a $K$-approximate group for some $K \geq 1$, then we have
\begin{equation}\label{splash}
 |A^m \cap V| \leq C_m K^{C_m} |A|^{\dim V/\dim G}
\end{equation}
for any $m \geq 1$, where $C_m = C_m(G,M)$.
\end{lemma}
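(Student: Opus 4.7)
The plan is to adapt the Larsen--Pink fiber-counting argument \cite{larsen-pink} to the approximate-group setting, proceeding by induction on $d = \dim V$. The base case $d=0$ is immediate: $V$ consists of at most $C(M)$ points by Bezout, so $|A \cap V| \leq C(M)$, matching the bound since $|A^C|^0 = 1$. The case $d = \dim G$ is also trivial, since $|A \cap V| \leq |A|$.

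For $0 < d < \dim G$, after decomposing $V$ into its (boundedly many) irreducible components we may assume $V$ irreducible. The key geometric object is the left stabilizer
\[ H \; := \; \{ g \in G(\overline{k}) : gV = V \}, \]
an algebraic subgroup of $G$ whose complexity is bounded in terms of $G$ and $M$, with $\dim H \leq d$. In the non-degenerate case $\dim H < d$, let $\pi : G \to G/H$ be the quotient, so that $\pi(V) \subseteq G/H$ is a subvariety of dimension $d - \dim H > 0$. The elementary inequality
\[ |A \cap V| \; \leq \; |\pi(A \cap V)| \cdot \max_{g \in G} |A \cap gH| \]
then reduces the count to two strictly lower-dimensional Larsen--Pink-type problems: the first factor is bounded by the inductive hypothesis applied to $\pi(V)$ (after equivariantly embedding $G/H$ into a $G$-variety to fit the Chevalley framework), and the second by the inductive hypothesis applied to the coset $gH$, a variety of dimension $\dim H < d$. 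The degenerate case $\dim H = d$, in which $V = Hv_0$ is a single coset of a proper subgroup, is handled analogously, using $(A \cap V)(A \cap V)^{-1} \subseteq A^2 \cap H$ to reduce to an intersection with the proper algebraic subgroup $H$ and then recursing on the ambient dimension $\dim G$.

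The induction closes provided that all auxiliary subvarieties (irreducible components of $V$, the stabilizer $H$, the image $\pi(V)$, the cosets $gH$) have complexity bounded in terms of $G$ and $M$ alone, and $A$ remains sufficiently Zariski-dense relative to all of them. This is the role of the threshold $M_0 = M_0(G,M)$: one chooses it large enough that sufficient Zariski-density precludes $A$ from being trapped in any proper subvariety produced in the course of the induction, thereby converting dimensional inequalities into honest combinatorial ones. At each recursive step one passes from $A$ to $A^{O(1)}$ to absorb translations by coset representatives (such as $v_0$ above) and the formation of $\pi(A \cap V)$; this accounts for the factor $|A^C|$ rather than $|A|$ in the main bound. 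The ``in particular'' clause for $K$-approximate groups follows at once by applying the main inequality to $A^m$ (still sufficiently Zariski-dense, since it contains $A$) and invoking the approximate-group estimate $|(A^m)^C| \leq K^{Cm-1}|A|$.

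The principal obstacle is the fiber-counting step when $\dim H < d$: a variety in the homogeneous space $G/H$ is not a priori a variety in a Chevalley group, so the inductive hypothesis does not apply verbatim. One way to resolve this is to embed $G/H$ equivariantly into a flag-type $G$-variety at a cost in complexity bounded only in terms of $G$ and $M$, thereby lifting the problem back into (a variety inside) $G$. Alternatively, as noted in the statement of the lemma, a nonstandard/ultrafilter argument bypasses the explicit complexity bookkeeping entirely, at the cost of losing effective control over the constants $C(G,M)$.
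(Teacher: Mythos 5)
Your approach is genuinely different from the paper's, and as written it has a real gap. You are reconstructing something closer to the original Larsen--Pink argument for finite subgroups: induct on $\dim V$, pass to the stabilizer $H=\{g:gV=V\}$, and split into a generic fibration case ($\dim H<\dim V$) and a degenerate coset case ($\dim H=\dim V$). The paper instead uses an extremal ``pincer'' argument: one assumes the bound fails for some dimension and takes the \emph{smallest} and \emph{largest} bad dimensions $d_-\leq d_+$, with witnesses $V_-,V_+$. Using simplicity of $G$ and the sufficient Zariski-density of $A$, one conjugates $V_-$ by an element of $A$ so that $\dim(V_-\cdot V_+)>d_+$, and then studies the product map $(A\cap V_-)\times(A\cap V_+)\to A^2\cap(V_-\cdot V_+)$: the image is controlled by the bound in dimensions $>d_+$, and the generic fibres are varieties of dimension $d_-+d_+-\dim(V_-\cdot V_+)<d_-$, controlled by the bound in dimensions $<d_-$. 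This gives a contradiction, and crucially the whole argument never leaves $G$ --- one only ever intersects $A$ with subvarieties of $G$.

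Your version has two linked problems. First, in the non-degenerate case, $|\pi(A\cap V)|$ lives inside $G/H$, not $G$, and the inequality being proved only speaks about $|A\cap W|$ for subvarieties $W$ of the Chevalley group $G$ itself. ``Equivariantly embed $G/H$ into a $G$-variety'' does not repair this: even after embedding, $\pi(A\cap V)$ is a subset of some $G$-variety $X$, and you would need a Larsen--Pink estimate for subsets of $G$-varieties $X$, a genuinely stronger statement that the lemma's induction does not provide. Second, and more fundamentally, the degenerate case is circular: if $V=Hv_0$ with $\dim H=d=\dim V$, then the reduction $|A\cap V|\leq|A^2\cap H|$ replaces one $d$-dimensional variety ($V$) by another ($H$) --- there is no decrease in dimension, so ``the inductive hypothesis'' does not apply. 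The phrase ``recursing on the ambient dimension $\dim G$'' suggests running the lemma with $H$ in place of $G$, but $H$ need not be a Chevalley group, and even if it were, the resulting exponent would be normalized by $\dim H$ rather than $\dim G$, which is not what you need. Notice also that nowhere in your argument do you visibly use that $G$ is \emph{simple}, whereas the inequality is simply false for, say, $G$ a torus of dimension $2$ and $V$ a one-dimensional subtorus --- and for exactly that example, your degenerate case is the one that arises and fails to close. In the paper's argument, simplicity is used precisely at the step where $V_-$ is conjugated so that $V_-\cdot V_+$ grows; that step has no counterpart in your write-up, and the circularity in your degenerate case is where that missing input would have to enter.

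Your treatment of the ``in particular'' clause (apply the main inequality to $A^m$, which is still sufficiently Zariski-dense, and use $|A^{mC}|\leq K^{mC-1}|A|$) is fine.
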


\emph{Sketch proof.}  The claim is clear when $\dim V = 0$ and $\dim V=\dim G$.  Now suppose for sake of contradiction that there exists dimensions $d_-, d_+$ with $0 < d_- \leq d_+ < \dim G$ and varieties $V_-, V_+$ of dimensions $d_-, d_+$ respectively such that the claim fails for $V_-, V_+$, but holds for varieties of dimensions less than $d_-$ or greater than $d_+$.  Using the hypothesis that $G$ is simple and $A$ is sufficiently Zariski-dense, it is possible to conjugate $V_-$ by an element of $A$ so that $V_- \cdot V_+$ has dimension strictly greater than that of $V_+$.  Now, we consider the product map from $(A \cap V_-) \times (A \cap V_+)$ to $(A^2 \cap (V_- \cdot V_+))$.  By hypothesis, the cardinality of $(A^2 \cap (V_- \cdot V_+))$ is at most $C |A^{C}|^{\dim(V_- \cdot V_+)/\dim G} $.  On the other hand, from a further application of the hypothesis, the fibres of this map generically have cardinality at most $C |A^{C}|^{(\dim(V_-)+\dim(V_+)-\dim(V_- \cdot V_+))/\dim G} $.  This implies that $(A \cap V_-) \times (A \cap V_+)$ has cardinality at most $C|A^{C}|^{(\dim(V_-)+\dim(V_+))/\dim(G)}$, which contradicts the construction of $V_-, V_+$. \hfill $\Box$\vspace{11pt}

Several details were suppressed in the above sketch, the most obvious of which were the rather loose use of the word ``generic'' and of the phrase ``sufficiently Zariski-dense''. It is possible to proceed carefully and make rigorous sense of the sketch, but this requires some rather painful quantitative algebraic geometry. Alternatively, an ultrafilter argument may be employed ; whilst this eliminates the need for such quantitative work, it does mean that the constants appearing in the statement of Lemma \ref{lpi} are ineffective and hence so, ultimately, are those in Theorems \ref{mainthm1}, \ref{mainthm2}, \ref{mainthm3} and in the applications described in \S \ref{app-sec}. In the longer paper to follow we present both approaches, though we shall keep our discussion of quantitative algebraic geometry to the minimum required to give a rough shape to the constants appearing in our main theorems.

Let us show how, when $G = \SL_2(\F_p)$ (say), we may use the Larsen-Pink inequality to recover the crucial Lemma \ref{torus-lem} with the constant $20$ replaced by $3$. First of all, we observe that if $A$ generates $\SL_2(\F_p)$ then $A$ is sufficiently Zariski dense. This follows from Dickson's classification of proper subgroups of $\SL_2(\F_p)$. Now if $a \neq \id$ lies in some torus $T$ then, since $a$ is regular semisimple, $T$ is equal to the centraliser $Z(a)$. This being an algebraic subvariety of dimension $1$, the Larsen-Pink inequality immediately gives us the upper bound in Lemma \ref{torus-lem}, namely the inequality
\[ |A^{3} \cap T| \lesssim |A|^{1/3}.\]
To get a bound in the other direction write $C(a)$ for the conjugacy class of $a$ in $\SL_2(\overline{\F}_p)$ and examine the map $\pi : A \rightarrow A^{3} \cap C(a)$ defined by $\pi(x) =x^{-1} a x$. Now $C(a)$ is also an algebraic subvariety, this time of dimension $2$ (given by $\{x : \tr x = \tr a\}$), and thus another application of the Larsen-Pink inequality tells us that
\[ |\im \pi| \lesssim |A|^{2/3}.\]
It follows that some fibre of the map $\pi$ has size $\gtrsim |A|^{1/3}$; but if $x,y$ are two elements in such a fibre we clearly have $xy^{-1} \in Z(a) = T$. Thus we obtain the lower bound
\begin{equation}\label{tor-lower}|A^3 \cap T| \gtrsim |A|^{1/3}\end{equation} as well.
Let us note once again that the preceding argument is very close to one used by Helfgott in \cite{helfgott-sl2} and \cite{helfgott-sl3}.\vspace{11pt}

Let us conclude this section with some remarks on the proof of Theorem \ref{mainthm2}. The argument is exactly the same as the one we sketched for $\SL_2$ in Section \ref{sl2-sec}, except that the required estimates for points on tori and degenerate tori come from the Larsen-Pink inequalities rather than Helfgott's papers and one needs a little algebraic group theory to establish basic properties of the maximal tori of $G(\overline{k})$ and, in particular, the finiteness of the Weyl group $N_G(T)/T$.

\section{Sketch proof of Theorem \ref{mainthm3}.}

In this section we outline, very briefly, the ideas behind the proof of Theorem \ref{mainthm3}. Recall that in this theorem it was claimed that approximate subgroups of $\GL_n(\C)$ are controlled by nilpotent approximate groups.

Let $A \subseteq \GL_n(\C)$ be a $K$-approximate group. The first step of the argument is to show that $A$ is sufficiently Zariski-dense inside \emph{some} algebraic subgroup $G \subseteq \GL_n(\C)$ of bounded complexity, and this is achieved by a dimension descent argument.

If $G$ is simple we are already done by Theorem \ref{mainthm1}: in fact $|A| \leq K^C$. It follows quickly that the same conclusion holds when $G$ is semisimple (has trivial solvable radical), in which case it is  known from the theory of algebraic groups that $G$ is an almost direct product of almost-simple groups. For a general $G$, we may consider the image of $A$ under the projection $G \rightarrow G/\mbox{Rad}(G)$ onto the semisimple part of $G$ which, by the preceding discussion, is bounded in size by $K^C$. This easily implies that $A$ is $K^C$-controlled by a $K^C$-approximate subgroup of $\mbox{Rad}(G)$, a solvable group. Finally, this in turn is $K^{C'}$-controlled by a nilpotent $K^{C'}$-approximate group $B'$ by the main result of \cite{bg-2}. If desired, that same paper could be used to further control $A$ by a \emph{nilpotent progression}.

\section{Applications and further remarks}\label{app-sec}

Some applications of our results regarding the diameter of finite simple groups are mentioned in Helfgott's paper \cite{helfgott-sl2} in the $\SL_2$ case and the proofs adapt straightforwardly to the more general case. For example we get a special case of a conjecture of Babai and Seress (\cite[Conjecture 1.7]{babai}).

\begin{theorem}[Diameter of $G(\F_p)$]
Let $G$ be a Chevalley group. Then there is a constant $C=C(G)>0$ such that the diameter of \emph{every} Cayley graph of $G(\F_p)$ is at most $C\log^C p$.
\end{theorem}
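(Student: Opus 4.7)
The plan is to deduce this diameter bound from Theorem \ref{mainthm2} by the standard Helfgott iteration scheme. Let $S \subseteq G(\F_p)$ be symmetric with $1 \in S$ and $\langle S \rangle = G(\F_p)$, and set $S_i := S^{3^i}$. The main step is a product-theorem dichotomy: there exist $\delta = \delta(G) > 0$ and $c = c(G) > 0$ such that whenever $S_i \neq G(\F_p)$, either $|S_{i+1}| \geq |S_i|^{1+\delta}$, or $|S_i| \geq |G(\F_p)|^{1-c}$.

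To prove the product theorem, suppose $|S_i^3| \leq |S_i|^{1+\delta}$ and set $K := |S_i|^\delta$, so that $|S_i^3| \leq K|S_i|$. The non-abelian Plünnecke-Ruzsa machinery of \cite{tao-noncommutative} then produces a $K^{O(1)}$-approximate subgroup $H \subseteq G(\F_p)$ that $K^{O(1)}$-controls $S_i$ in the sense of Definition \ref{def1.2}, with $|H| \leq K^{O(1)}|S_i|$ and $H \subseteq S_i^{O(1)}$; a standard enlargement (replacing $H$ by an approximate supergroup inside $S_i^{O(1)}$) ensures $H \supseteq S_i$, so $H$ also generates $G(\F_p)$. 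Theorem \ref{mainthm2} then applies to $H$ and yields that $H$ is $K^{C_{\dim G}}$-controlled by $\{\id\}$ or by $G(\F_p)$. The first alternative forces $|S_i| \leq |H| \leq K^{C_{\dim G}} = |S_i|^{C_{\dim G}\delta}$, which for $\delta < 1/(2C_{\dim G})$ contradicts $|S_i| \geq 2$. The second yields $|G(\F_p)| \leq K^{C_{\dim G}}|H| \leq K^{O(1)}|S_i| = |S_i|^{1+O(\delta)}$, hence $|S_i| \geq |G(\F_p)|^{1-O(\delta)}$.

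Starting from $|S_0| \geq 2$, at most $i_0 = O_G(\log\log p)$ iterations of the first alternative drive $\log|S_i|$ past $\log|G(\F_p)|$, so within that many steps we must reach the second alternative $|S_{i_0}| \geq |G(\F_p)|^{1-c}$. To close the remaining gap to $G(\F_p)$, I would invoke Gowers' quasirandomness inequality for finite simple groups: every nontrivial complex representation of the Chevalley group $G(\F_p)$ has dimension at least a positive power $p^{c_G}$ of $p$ by Landazuri-Seitz, so $S_{i_0}^3 = G(\F_p)$ whenever $|S_{i_0}|^3 \geq |G(\F_p)|^3 / p^{c_G}$. Choosing $c$ smaller than $c_G/(3\dim G)$ secures this, yielding the diameter bound $3^{i_0+1} = (\log p)^{O_G(1)}$.

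The main obstacle in this plan is the derivation of the product theorem: one must convert the tripling hypothesis on a generating set into an approximate subgroup that itself still generates $G(\F_p)$, which is the precise input required by Theorem \ref{mainthm2}. Once this is in place the iteration is routine; the final density-amplification step from density $|G(\F_p)|^{-c}$ to full coverage requires separate input, most cleanly via Gowers' inequality together with the Landazuri-Seitz bounds.
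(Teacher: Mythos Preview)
Your argument is correct and is exactly the Helfgott iteration the paper has in mind when it says the proofs ``adapt straightforwardly'' from \cite{helfgott-sl2}; the paper itself gives no further detail, so there is nothing finer to compare against.

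Two small remarks. First, the step you flag as the ``main obstacle'' is not one: since $S_i$ is symmetric and contains the identity, the hypothesis $|S_i^3| \leq K|S_i|$ together with the non-abelian Pl\"unnecke--Ruzsa inequalities of \cite{tao-noncommutative} and a Ruzsa covering already shows that $S_i^2$ (or $S_i^3$) is itself a $K^{O(1)}$-approximate group, and it trivially contains $S_i$ and hence generates $G(\F_p)$; no separate ``enlargement'' trick is needed. Second, your use of Gowers' quasirandomness \cite{gowers} with the Landazuri--Seitz lower bounds on representation degrees is the clean modern way to handle the endgame and is consistent with the paper's bibliography; Helfgott's original $\SL_2$ argument did this step by a more hands-on computation, but either route suffices here.
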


Similarly we obtain a logarithmic bound $O(\log p)$ on the diameter of those Cayley graphs of $G(\F_p)$ that are obtained as reduction mod $p$ of a subset of $G(\Z)$ which generates a Zariski-dense subgroup, using the strong approximation theorem of Matthews-Weisfeiler-Vasserstein \cite{MVW} and the Tits alternative \cite{tits} to get a logarithmic lower bound on the girth of the Cayley graphs. The same logarithmic bound on the diameter also holds for random Cayley graphs of $G(\F_p)$, because the girth of a random Cayley graph of $G(\F_p)$ is at least logarithmic in $p$ by a result of Gamburd et al. \cite{gamburdHSSV}.

%However this is subsumed by Theorem \ref{expander} below.

Helfgott's results on $\SL_2$ were spectacularly applied by Bourgain and Gamburd \cite{bourgain-gamburd} to show that various families of Cayley graphs on $\SL_2(\F_p)$ are expanders.

%We may now formulate the following generalisation of their result.

%{\bf Emmanuel, in the previous version you had ``almost simple Chevalley group''. I'd like some clarification of what this means.}
%\begin{theorem}[Random Cayley graphs of $G(\F_p)$ as expanders]\label{expander}
%Let $G$ be a Chevalley group. For any $k \geq 2$, random Cayley graphs of $G(\F_p)$ on $k$ generators are expanders with probability $1 - o_{p \rightarrow \infty}(1)$.
%\end{theorem}

They deduced the spectral gap from two ingredients : the classification of approximate subgroups on the one hand (i.e. Helfgott's
theorem for $SL(2)$), and a non-concentration estimate bounding the mass given by the simple random walk on the Cayley graph to every proper subgroup on the other hand.

%This estimate can be verified for random Cayley graphs of $G(\F_p)$ using Nori's theorem \cite{nori} or the Larsen-Pink classification of subgroups of $G(\F_p)$ \cite{larsen-pink}.

In subsequent work of Bourgain and Gamburd \cite{bourgain-gamburd2} and \cite{bourgain-gamburd3}, the following theorem is obtained modulo our Theorem \ref{mainthm1} for $\SL_n$. The following, then, is now unconditional.

\begin{theorem}[Quotients of $\SL_n(\Z)$ as expanders]\label{sldexpander}
Let $n \geq 2$. Suppose that $S$ is a finite symmetric subset of $\SL_n(\Z)$ generating a Zariski dense subgroup, and write $S_p$ for the reduction of $S$ modulo $p$. Then the Cayley graphs $\mathcal{G}(\SL_n(\F_p),S_p)$ form a family of expanders as $p \rightarrow \infty$.
\end{theorem}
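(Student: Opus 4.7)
The plan is to observe that, as noted in the excerpt, Bourgain and Gamburd in \cite{bourgain-gamburd2,bourgain-gamburd3} have already reduced Theorem~\ref{sldexpander} to Theorem~\ref{mainthm1}.  The task therefore is only to recall which three inputs of the Bourgain--Gamburd spectral gap framework are being invoked, and check that all three are in place: (a) \emph{quasirandomness} of $\SL_n(\F_p)$; (b) a \emph{product/classification theorem} for $K$-approximate subgroups of $\SL_n(\F_p)$; and (c) \emph{non-concentration} of the $t$-step random walk on proper subgroups.

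Ingredient (a) is the Landazuri--Seitz lower bound, giving that every nontrivial complex irreducible representation of $\SL_n(\F_p)$ has dimension at least a positive power of $p$.  Ingredient (b) is precisely Theorem~\ref{mainthm1}.  Ingredient (c) is where the Zariski-density of $\langle S\rangle$ enters: the Tits alternative \cite{tits} produces a nonabelian free subgroup $F \subseteq \langle S\rangle$, and the Matthews--Vasserstein--Weisfeiler strong approximation theorem \cite{MVW} shows that, for all sufficiently large $p$, the reduction $\langle S\rangle \to \SL_n(\F_p)$ is surjective.  A standard height/B\'ezout argument then shows that a nontrivial word in $F$ of length $\ell$ cannot vanish in $\SL_n(\F_p)$ unless $p \leq C^\ell$, yielding a $c\log p$ girth bound for $\mathcal{G}(\SL_n(\F_p), S_p)$; combined with a classification of proper subgroups of $\SL_n(\F_p)$, this gives the required non-concentration estimate.

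Given (a)--(c), the Bourgain--Gamburd $\ell^2$-flattening argument runs as follows.  Let $\mu$ be uniform on $S_p$ and set $t = c\log p$.  If $\|\mu^{*t}\|_2$ has not decayed to roughly $|\SL_n(\F_p)|^{-1/2}$, then at some dyadic scale $\mu^{*t}$ concentrates on a set whose non-commutative Balog--Szemer\'edi--Gowers analysis \cite{tao-noncommutative} produces a $p^{O(\eps)}$-approximate subgroup $A \subseteq \SL_n(\F_p)$.  The girth bound prevents $|A|$ from being too small; non-concentration on proper subgroups forces $\langle A\rangle = \SL_n(\F_p)$; and Theorem~\ref{mainthm1} then forces $|A| \gtrsim |\SL_n(\F_p)|$, contradicting the assumed concentration.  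Thus $\|\mu^{*t}\|_2$ is small, and quasirandomness converts this into the desired spectral gap, uniformly in $p$.  The main obstacle is the uniformity in $p$ of ingredient (c): one must control concentration on \emph{all} proper subgroups of $\SL_n(\F_p)$ at once, not merely on a fixed algebraic subgroup, which is handled by combining the finite list of maximal algebraic subgroups of $\SL_n$ up to conjugacy with Nori-type descriptions of maximal subgroups of $\SL_n(\F_p)$, as developed in \cite{bourgain-gamburd2,bourgain-gamburd3}.
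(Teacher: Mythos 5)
Your proposal is correct and takes essentially the same approach as the paper: the paper's ``proof'' of Theorem~\ref{sldexpander} is simply to cite Bourgain--Gamburd \cite{bourgain-gamburd2,bourgain-gamburd3}, who established the result modulo a product theorem for approximate subgroups of $\SL_n(\F_p)$, and observe that Theorem~\ref{mainthm1} now supplies exactly that missing ingredient. Your elaboration of the three-part Bourgain--Gamburd machinery (quasirandomness, the product theorem, and non-concentration via Tits + strong approximation to control girth and via Nori-type subgroup classification to control escape from proper subgroups) is an accurate unpacking of what that citation contains, though the announcement itself does not spell it out.
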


Using a slightly different method but similar techniques as in the above works of Bourgain and Gamburd \cite{bourgain-gamburd2,bourgain-gamburd3}, in particular random matrix products theory in combination with Nori's theorem, we can prove directly the non-concentration estimate and thus generalize the above to all simple Chevalley groups $G(\Z)$. We thus obtain the following.

\begin{theorem}[Quotients of $G(\Z)$ as expanders]\label{Gexpander}
Let $G$ be a Chevalley group. Suppose that $S$ is a finite symmetric subset of $G(\Z)$ generating a Zariski dense subgroup, and write $S_p$ for the reduction of $S$ modulo $p$. Then the Cayley graphs $\mathcal{G}(G(\F_p),S_p)$ form a family of expanders as $p \rightarrow \infty$.
\end{theorem}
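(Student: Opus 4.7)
The plan is to apply the Bourgain-Gamburd machinery of \cite{bourgain-gamburd2, bourgain-gamburd3}, adapted to an arbitrary Chevalley group $G$. Write $\mu$ for the uniform probability measure on $S_p$ and $\mu^{(k)}$ for its $k$-fold convolution. A uniform spectral gap for $\mathcal{G}(G(\F_p), S_p)$ is equivalent to the $\ell^2$-flattening bound
\[ \|\mu^{(k)}\|_2^2 \leq |G(\F_p)|^{-1+o(1)} \quad \text{for some } k = O(\log p), \]
which, once established, is promoted to a genuine gap using the \emph{quasi-randomness} of $G(\F_p)$: the smallest non-trivial complex representation has dimension $\gtrsim |G(\F_p)|^{\alpha}$ for some $\alpha = \alpha(G) > 0$, a classical fact for simple groups of Lie type (Landazuri-Seitz).

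To establish the flattening bound I would combine three ingredients in the usual Bourgain-Gamburd induction. First, Theorem \ref{mainthm2}: any generating $K$-approximate subgroup of $G(\F_p)$ is either of size $\leq K^{C}$ or is $K^{C}$-controlled by all of $G(\F_p)$. Second, a standard Balog-Szemer\'edi-Gowers plus Pl\"unnecke-Ruzsa argument saying that if $\|\mu^{(k)} * \mu^{(k)}\|_2 \geq K^{-1} \|\mu^{(k)}\|_2$, then a large fraction of the mass of $\mu^{(k)}$ concentrates on a coset of a $K^{O(1)}$-approximate subgroup $A \subseteq G(\F_p)$ with $|A| \sim \|\mu^{(k)}\|_2^{-2}$ up to losses polynomial in $K$. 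Third, a \textbf{non-concentration estimate}: for every proper subgroup $H \leq G(\F_p)$ one has $\mu^{(k)}(H) \leq p^{-\delta}$ for $k \sim \log p$ and some $\delta = \delta(G,S) > 0$. Running these together: either $\|\mu^{(k)}\|_2^2$ shrinks by a factor $p^{-\eta}$ per convolution step, completing the induction in $O(\log p)$ steps, or Theorem \ref{mainthm2} forces the approximate group $A$ from the second ingredient to be either tiny or nearly all of $G(\F_p)$; both cases are ruled out by the non-concentration estimate at the appropriate scale.

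The main new work is the non-concentration estimate, and this is where the principal obstacle lies. I would proceed in two stages. The first is a girth bound $\mathrm{girth}(\mathcal{G}(G(\F_p), S_p)) \geq c \log p$, obtained by lifting short relations mod $p$ to $G(\Z)$ via the strong approximation theorem of Matthews-Weisfeiler-Vasserstein \cite{MVW} and contradicting the Tits alternative \cite{tits} applied to $\langle S \rangle$; this controls concentration on subgroups that embed freely. The second, more delicate stage is to bound $\mu^{(k)}(H)$ for arbitrary proper $H \leq G(\F_p)$. Using Nori's theorem, such an $H$ is, up to bounded index, the $\F_p$-points of a proper algebraic subgroup $\widetilde{H} \subseteq G$; by a Larsen-Pink type counting argument (in the spirit of Lemma \ref{lpi}) one reduces to controlling, for each of finitely many conjugacy classes of proper algebraic subgroups of $G$, the probability that a random word of length $k$ in $S$ lies in a given conjugate. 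Here I would invoke random matrix product theory---specifically the positivity and simplicity of the Lyapunov spectrum of Furstenberg-Goldsheid-Guivarc'h-Margulis for random walks driven by a Zariski-dense measure---to show that the random walk on $G(\R)$ escapes every proper algebraic subvariety exponentially fast, yielding the required $p^{-\delta}$ bound after reduction mod $p$.

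The hard part is making this non-concentration estimate uniform in $p$ and uniform over the (intricate) family of proper subgroups of a general Chevalley group: subfield subgroups, centralizers of semisimple elements, and parabolic-type subgroups each require separate care, and one must effectively combine the characteristic-zero Lyapunov exponent input (which gives decay in $k$) with a Nori/Larsen-Pink input (which gives uniformity over the finitely many types of algebraic subgroups and passes from $G(\R)$ to $G(\F_p)$). Everything else in the Bourgain-Gamburd scheme is, by now, essentially routine once Theorem \ref{mainthm2} is granted.
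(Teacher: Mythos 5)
Your proposal matches the paper's outlined strategy: the Bourgain--Gamburd scheme (quasirandomness plus $\ell^2$-flattening via BSG and the approximate-group classification from Theorem \ref{mainthm2}), with the key new non-concentration estimate obtained by combining Nori's theorem with random matrix products theory, exactly as the authors indicate. The girth bound via Matthews--Weisfeiler--Vaserstein and the Tits alternative that you include as a first step is also the mechanism the paper invokes for the related diameter result, so your elaboration is consistent with the intended argument.
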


Finally let us mention once again that as a consequence of our results and the paper of Varj\'u \cite{varju} the ``affine sieve''  of Bourgain, Gamburd and Sarnak \cite{bgs} may be applied in much more general contexts. We refer the reader to Varj\'u's paper for details.

\setcounter{tocdepth}{1}

%{\bf I fixed the new page issue - I was using include instead of input.  -T.}

\end{document}